\newcommand{\PP}{\mathbb{P}}
\newcommand{\OO}{\mathcal{O}}
\newcommand{\CC}{\mathbb{C}}
\newcommand{\xX}{\mathcal{X}}
\newcommand{\yY}{\mathcal{Y}}
\newcommand{\ver}{\operatorname{vert}}
\theoremstyle{plain}
\newtheorem{lemma}{Lemma}[section]
\newtheorem*{theorem*}{Theorem}
\newtheorem*{lemma*}{Lemma}
\newtheorem*{proposition*}{Proposition}
\newtheorem*{conjecture*}{Conjecture}
\newtheorem*{corollary*}{Corollary}
\newtheorem*{problem*}{Problem}
\newtheorem{theorem}[lemma]{Theorem}
\newtheorem{conjecture}[lemma]{Conjecture}
\newtheorem{corollary}[lemma]{Corollary}
\newtheorem{proposition}[lemma]{Proposition}
\newtheorem{question}[lemma]{Question}
\theoremstyle{definition}
\newtheorem{remark}[lemma]{Remark}
\begin{document}

\title{Algebraic hyperbolicity of the very general quintic surface in $\PP^3$}
\author[I. Coskun]{Izzet Coskun}
\address{Department of Mathematics, Statistics and CS \\University of Illinois at Chicago, Chicago, IL 60607}
\email{coskun@math.uic.edu}

\author[E. Riedl]{Eric Riedl}
\email{ebriedl@uic.edu}

\subjclass[2010]{Primary: 32Q45, 14H50. Secondary: 14J70, 14J29}
\keywords{Algebraic hyperbolicity, hypersurfaces, genus bounds}
\thanks{During the preparation of this article the first author was partially supported by the  NSF grant DMS-1500031 and NSF FRG grant  DMS 1664296 and  the second author was partially supported  by the NSF RTG grant DMS-1246844.}

\begin{abstract}
We prove that a curve of degree $dk$ on a very general surface of degree $d \geq 5$ in $\PP^3$ has geometric genus at least $\frac{dk(d-5)+k}{2} + 1$. This improves bounds given by G. Xu. As a corollary, we conclude that the very general quintic surface in $\PP^3$ is algebraically hyperbolic.  
\end{abstract}

\maketitle

\section{Introduction}
A complex projective variety $X$ is {\em algebraically hyperbolic} if there exists an $\epsilon > 0$ such that for any curve $C \subset X$ of geometric genus $g(C)$ we have $$2g(C)-2 \geq \epsilon \deg(C).$$ In particular, algebraically hyperbolic varieties contain no rational or elliptic curves. Algebraic hyperbolicity is intimately related to metric notions of hyperbolicity. Recall that a variety $X$ is {\em Brody hyperbolic} if it admits no holomorphic maps from $\CC$. By Brody's Theorem \cite{Brody}, Brody hyperbolicity is equivalent to the nondegeneracy of the Kobayashi metric on compact manifolds. Brody hyperbolicity is conjectured to control many geometric and arithmetic properties of $X$. For example, Demailly \cite{Demailly} proves that Brody hyperbolic varieties are algebraically hyperbolic and conjectures the following.

\begin{conjecture}
A smooth complex projective variety $X$ is Brody hyperbolic if and only if it is algebraically hyperbolic.
\end{conjecture}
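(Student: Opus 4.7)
The statement in question is Demailly's conjecture, which remains an important open problem, so any ``proof proposal'' must be read as a research strategy rather than a workable argument. The excerpt already cites Demailly's theorem that Brody hyperbolicity implies algebraic hyperbolicity, so all of my effort would go into the reverse implication: if $X$ is algebraically hyperbolic, then $X$ admits no nonconstant entire holomorphic curve $f \colon \CC \to X$. I would argue by contrapositive, assuming such an $f$ exists and trying to contradict the uniform genus bound $2g(C)-2 \geq \epsilon \deg(C)$ by producing an algebraic curve whose genus grows too slowly in its degree.

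First I would pass to the Zariski closure $Y = \overline{f(\CC)} \subseteq X$ and try to reduce to the curve case. If $\dim Y = 1$, then a desingularization of $Y$ gives an algebraic curve on which $f$ lifts to a nonconstant entire map, and classical Ahlfors--Shimizu value distribution theory forces its geometric genus to be at most $1$, immediately violating algebraic hyperbolicity. To force this reduction when $\dim Y \geq 2$, I would build enough global sections of the Demailly--Semple bundle of invariant jet differentials $E_{k,m}^{GG}T^*_X$ (twisted by a negative line bundle) so that $f$ must satisfy an algebraic differential equation, cutting $Y$ down dimension by dimension until it becomes a curve. This is the Green--Griffiths--Lang program applied inside $X$.

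The hard part, and the reason the conjecture is open, lies in the last step of this pipeline: translating the analytic input (Nevanlinna-style estimates comparing the order function $T_f(r)$ to counting functions) into an intrinsic bound on the ratio $\deg(C)/g(C)$ for an actual algebraic curve $C \subset X$. Value distribution theory outputs only asymptotic inequalities on characteristic functions of $f$, and there is no general mechanism for converting these into a uniform $\epsilon$-bound of the form required by algebraic hyperbolicity. Closing that gap is essentially equivalent to resolving Green--Griffiths--Lang, so I do not expect it to fall to any direct attack.

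In the absence of such a bridge, the only realistic plan is to verify the conjecture class by class. One fixes a geometric class---very general hypersurfaces of high degree in $\PP^3$ being the natural test case---and proves algebraic hyperbolicity with effective constants by deformation-theoretic and incidence-variety methods of the kind used later in this paper, then independently proves Brody hyperbolicity in the same class by constructing enough jet differentials. The equivalence is then checked by hand, and the conjecture advances in examples even though the unifying structural reason for it (the translation step above) remains the main obstacle.
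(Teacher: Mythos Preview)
The paper does not prove this statement; it is stated as Conjecture~1.1 and attributed to Demailly, with only the forward implication (Brody $\Rightarrow$ algebraic) cited as a known theorem. You correctly identify this as an open problem and present your write-up as a research outline rather than a proof, so there is no discrepancy to flag and no competing argument in the paper to compare against.

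Your outline is a fair summary of the Green--Griffiths--Lang circle of ideas and is candid about the essential obstruction: the lack of a mechanism converting Nevanlinna-type asymptotics for an entire curve into a uniform lower bound on $(2g-2)/\deg$ for algebraic curves. One small remark on the $\dim Y = 1$ step: you invoke Ahlfors--Shimizu, but in fact the uniformization theorem already suffices, since a nonconstant lift $\CC \to \widetilde{Y}$ to the normalization forces $\widetilde{Y}$ to have genus $0$ or $1$. Otherwise your diagnosis of where the difficulty lies is accurate, and your fallback plan of verifying the conjecture class by class is exactly what the present paper contributes to on the algebraic side for quintic surfaces.
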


Hypersurfaces provide a natural testing ground for deep conjectures on hyperbolicity. Hence, we are led to the following question.

\begin{question}
\label{ques-algHyp}
For which $n$ and $d$ is a very general hypersurface of degree $d$ in $\PP^n$ algebraically hyperbolic?
\end{question}

More generally, one can ask the following.

\begin{question}
\label{ques-whichCurves}
For a very general hypersurface of degree $d$ in $\PP^n$, for which pairs $(e,g)$ does there exist a  curve of degree $e$ and  geometric genus $g$?
\end{question}

The study of the hyperbolicity of very general hypersurfaces in $\PP^n$ has a long history (see \cite{CoskunHyperbolicity,Demailly, DemaillyElGoul, Mcquillan, Siu}). For example, Question \ref{ques-algHyp} has been resolved in many cases. Results of Voisin \cite{Voisin, Voisincorrection} prove that if $n \geq 4$, then $$2g-2 \geq e(d-2n+2).$$  Therefore, a very general hypersurface of degree $d \geq 2n-1$ in $\PP^n$ is algebraically hyperbolic for $n \geq 4$. In the case $n=3$, any curve on $X$ is a complete intersection of type $(d,k)$. Geng Xu \cite{Xu} imporved results of Ein \cite{Ein} and proved that $$g \geq \frac{dk(d-5)}{2} + 2,$$ showing that surfaces in $\PP^3$ of degree at least $6$ are algebraically hyperbolic. However, despite all this interest, the case of quintics in $\PP^3$ has remained open for the past 20 years (see \cite{Demailly, Demaillynew}). Our goal in this paper is to prove the following.

\begin{theorem}
\label{thm-algHyp}
Let $X \subset \PP^3$ be a very general surface of degree $d \geq 5$. Then the geometric genus of any curve in $X$ of degree $dk$ is at least $\frac{dk(d-5)+k}{2} + 1$.
\end{theorem}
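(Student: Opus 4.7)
The plan is to adapt and sharpen Xu's deformation-theoretic argument. Let $X$ be a very general surface of degree $d\geq 5$ and $C\subset X$ an irreducible curve of degree $dk$. Noether--Lefschetz gives $\operatorname{Pic}(X)=\mathbb{Z}\cdot\OO_X(1)$, so $C$ is a complete intersection of type $(d,k)$; in particular $N_{C/X}\cong\OO_C(k)$ and $\deg\nu^*N_{C/X}=dk^2$, where $\nu\colon\tilde C\to C$ denotes the normalization. I would first embed $(X,C)$ in an irreducible family $\{(X_t,C_t)\}_{t\in B}$ whose image in $|\OO_{\PP^3}(d)|$ is dominant, and then study the canonical section $s\in H^0(\tilde C,\nu^*N_{C/X})$ produced by differentiating the family at $t=0$. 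This $s$ is nonzero by the nontriviality of the family, and its zero scheme contains the preimages of the singular points of $C$ with multiplicities dictated by the analytic type of each singularity.

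Combining adjunction $K_C=(K_X+C)|_C=\OO_C(d+k-4)$ with the identity $2g(\tilde C)-2=2p_a(C)-2-2\delta(C)$ reduces the desired genus bound to a lower bound on $\delta(C)$. Xu's method bounds $\delta(C)$ by comparing the vanishing of $s$ at the singularities of $C$ against the fixed degree $dk^2$ of $\nu^*N_{C/X}$, recovering $g\geq\frac{dk(d-5)}{2}+2$. To obtain the extra $\frac{k}{2}-1$ claimed here, the key observation is that the section $s$ is not an arbitrary element of $H^0(\nu^*\OO_C(k))$: it arises from varying the equation of $X$, and so factors through a Kodaira--Spencer-type map $H^0(\OO_{\PP^3}(d))/H^0(\II_C(d))\to H^0(\nu^*N_{C/X})$ whose image is a proper linear subsystem. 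Quantifying this constraint -- for instance by showing that every section in the image is forced to vanish along an additional effective divisor of degree $\geq k$ on $\tilde C$ beyond the singular-point contribution -- would, after being fed through the adjunction inequality, produce the extra $\frac{k}{2}-1$ in the genus bound.

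The main obstacle is making this refined vanishing count rigorous without double-counting against the zeros already exploited by Xu. This requires a careful stratification of the singular locus by multiplicity and tangential behavior, together with a separate analysis of the ``new'' zeros of $s$ coming from the subsystem constraint versus the ``old'' zeros attributable to $\delta(C)$. A secondary technical difficulty is keeping the argument tight for small $k$ and for $d=5$, where both $N_{C/X}$ and $K_X$ are only weakly positive and crude estimates quickly lose the sharp constant. Once these issues are resolved, combining the refined vanishing count of $s$ with the adjunction identity should give exactly $g(\tilde C)\geq\frac{dk(d-5)+k}{2}+1$.
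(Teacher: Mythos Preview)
Your proposal is not a proof but a research outline, and you say so yourself: the ``main obstacle'' of making the refined vanishing count rigorous is left open, and the final paragraph begins ``Once these issues are resolved\ldots''. Concretely, the mechanism you propose for gaining the extra $\tfrac{k}{2}-1$ is that the Kodaira--Spencer section $s$ lies in a proper subsystem of $H^0(\nu^*\OO_C(k))$ and therefore acquires an additional effective divisor of zeros of degree $\ge k$. But a proper linear subsystem need not have any base locus at all, and even when it does, there is no reason the base divisor should have degree comparable to $k$; you give no argument tying the subsystem constraint to a numerical lower bound on extra vanishing. Without that, the proposal reduces to Xu's bound plus a hope.

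The paper's proof is structurally different and avoids this difficulty entirely. Rather than studying a single section of $\nu^*N_{C/X}$, it uses the Ein--Voisin--Pacienza--Clemens--Ran machinery to produce a sheaf map
\[
h^*\Omega_{\PP^3}(1)\longrightarrow N_{h/X}
\]
with torsion cokernel (here $h\colon\tilde C\to X$ is the normalization map, and $N_{h/X}$ is the normal sheaf of the map, of degree $dk(4-d)+2g-2$). The image is a rank~$1$ quotient of $h^*\Omega_{\PP^3}(1)$. Via the Euler sequence, a line-bundle quotient of $h^*\Omega_{\PP^3}(1)$ of degree $-m$ corresponds to a surface scroll of degree $\le dk-m$ through $h(\tilde C)$; since a complete intersection of type $(d,k)$ with $d\ge 5$ and $X$ very general lies on no scroll of degree $<k$, every rank~$1$ quotient has degree $\ge k-dk$. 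Hence
\[
dk(4-d)+2g-2 \;=\; \deg N_{h/X} \;\ge\; k-dk,
\]
which rearranges to the stated bound. The extra $k$ thus comes from a geometric fact about scrolls containing $C$, not from counting zeros of a section; this is the missing idea in your approach.
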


In particular, when $d=5$, the bound on the genus of a curve of degree $5k$ specializes to 
$$2 g(C) - 2 \geq k = \frac{1}{5} \deg(C).$$ We obtain the following corollary.

\begin{corollary}
A very general quintic surface in $\PP^3$ is algebraically hyperbolic.
\end{corollary}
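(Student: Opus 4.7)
My plan is to derive the corollary directly from Theorem \ref{thm-algHyp}, using the classical Noether-Lefschetz theorem to pin down exactly which numerical types of curve can appear on a very general quintic. Recall that algebraic hyperbolicity demands a uniform constant $\epsilon > 0$ such that $2g(C) - 2 \geq \epsilon \deg(C)$ for \emph{every} curve $C \subset X$, so the first task is to understand the possible degrees.

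First, I would invoke Noether-Lefschetz: for a very general surface $X \subset \PP^3$ of degree $d \geq 4$, the Picard group of $X$ is freely generated by the restriction of the hyperplane class $H$. In particular, for the very general quintic every effective curve $C \subset X$ has class $kH$ for some integer $k \geq 1$, and hence
$$\deg(C) \;=\; k H^2 \;=\; 5k.$$
This is exactly the divisibility hypothesis that Theorem \ref{thm-algHyp} requires, so every curve on $X$ falls within the scope of that theorem.

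Next, I would specialize the genus bound of Theorem \ref{thm-algHyp} to $d = 5$, obtaining
$$g(C) \;\geq\; \frac{5k(5-5)+k}{2} + 1 \;=\; \frac{k}{2} + 1,$$
which rearranges to $2g(C) - 2 \geq k = \tfrac{1}{5}\deg(C)$. Taking $\epsilon = \tfrac{1}{5}$, the defining inequality of algebraic hyperbolicity holds uniformly for every curve on $X$, so $X$ is algebraically hyperbolic.

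There is no genuine obstacle at this level; the only subtlety is purely numerical, namely that the improvement of Xu's $\tfrac{dk(d-5)}{2}+2$ to the slightly larger bound $\tfrac{dk(d-5)+k}{2}+1$ is precisely what is needed to survive the collapse of the $(d-5)$ factor at $d = 5$. The substantive content of the paper therefore lies entirely in Theorem \ref{thm-algHyp}, whose consequence for algebraic hyperbolicity of the quintic is then a one-line unwinding.
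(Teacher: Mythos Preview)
Your proof is correct and follows essentially the same approach as the paper: specialize Theorem~\ref{thm-algHyp} to $d=5$ to obtain $2g(C)-2 \geq k = \tfrac{1}{5}\deg(C)$. You are more explicit than the paper in invoking Noether--Lefschetz to justify that every curve on $X$ has degree $5k$; the paper simply asserts in the introduction that any curve on a very general $X$ is a complete intersection of type $(d,k)$ and treats the corollary as an immediate consequence of the displayed inequality preceding its statement.
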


\subsection*{Organization of the paper} In \S \ref{sec-Prelim}, we recall the basic setup developed by Ein, Voisin, Pacienza, Clemens and Ran. In \S \ref{sec-Proof}, we prove Theorem \ref{thm-algHyp}. 

\subsection*{Acknowledgments} We would like to thank Lawrence Ein, Mihai P\u{a}un and Matthew Woolf for useful discussions.

\section{preliminaries} \label{sec-Prelim}

In this section, we review the basic setup due to \cite{ClemensRan, Ein, Ein2, Pacienza, Pacienza2, Voisin, Voisincorrection} and collect facts relevant to the rest of this paper. The main goal of this section is to prove Corollary \ref{cor-mapToNormalBundle}. We always work over the complex numbers $\CC$. 

Let $S_d = H^0(\PP^n, \OO_{\PP^n}(d))$.  Suppose that a general hypersurface of degree $d$ in $\PP^n$ admits a generically injective map from a smooth projective variety of deformation class $Y$. After an \'{e}tale base change $U \to S_d$, we can find a generically injective map $$h:\yY \to \xX$$ over $U$, where $\xX$ is the universal hypersurface of degree $d$ in $\PP^n$ over $U$ and $\yY$ is a smooth family of pointed varieties over $U$ with members in the deformation class of $Y$. We choose $\yY$ so that the codimension of $h(\yY)$ in $\xX$ is $n-1-\dim Y$. 
Let  $$\pi_1: \xX \to U \quad \mbox{and} \quad \pi_2: \xX \to \PP^n$$ denote the two natural projections of the universal hypersurface over $U$. Let the {\em vertical tangent sheaf} $T_{\xX}^{\ver}$ be defined by the natural sequence
$$0 \rightarrow T_{\xX}^{\ver} \rightarrow T_{\xX} \rightarrow \pi_2^* T_{\PP^n} \rightarrow 0.$$ 

Since every hypersurface contains a variety of deformation class $Y$, the family $\yY$ dominates $U$ under $\pi_1 \circ h$.  Moreover, without loss of generality, we may assume that $\yY$ is stable under the $GL_{n+1}$ action on $\PP^n$, so $\pi_2 \circ h$ dominates $\PP^n$ \cite{Pacienza, Voisin}. Furthermore, the invariance under $GL_{n+1}$ also implies that the map  $T_{\yY} \rightarrow h^* (\pi_2^* T_{\PP^n})$ is surjective. Let $T_{\yY}^{\ver}$ denote the kernel
$$0 \rightarrow T_{\yY}^{\ver} \rightarrow T_{\yY} \rightarrow h^* (\pi_2^* T_{\PP^n}) \rightarrow 0.$$

Let $t \in U$ be a general closed point.  Let $X_t$ denote the corresponding fiber of $\xX$. Let $$h_t: Y_t \to X_t$$ denote the corresponding map from the fiber $Y_t$ of $\yY$ over $t$ to $X_t$. Let $$i_t: X_t \rightarrow \xX \quad \mbox{and} \quad j_t: Y_t \rightarrow \yY$$ denote the inclusion of $X_t$ in $\xX$ and $Y_t$ in $\yY$, respectively. Observe that $i_t \circ h_t = h \circ j_t$. Let $N_{h/\xX}$ denote the normal sheaf to the map $h: \yY \to \xX$ and let $N_{h_t/X_t}$ denote the normal sheaf to $h_t$ defined as the cokernels of the following natural sequences 
$$ 0 \to T_{\yY} \to h^*T_{\xX} \to N_{h/\xX} \to 0$$
$$0 \rightarrow T_{Y_t} \rightarrow h_t^* T_{X_t} \rightarrow N_{h_t/X_t} \rightarrow 0.$$ 

Following Ein, Voisin, Pacienza, Clemens and Ran, it is standard to relate  $N_{h_t/X_t}$  to sheaves arising from the family $\yY \to \xX$. Unfortunately, the prior work does not explicitly state the theorem we will use. For the reader's convenience, we reprove the main statements, emphasizing the key points from our perspective. We will make repeated use of the following lemma. 

\begin{lemma}
\label{lem-staysExact}
Let $\phi: Z_0 \rightarrow Z$ be a morphism of projective varieties.
Let
\begin{equation}\label{seq-lemstaysexact}
0 \to E \to F \to G \to 0 
\end{equation}
be a short exact sequence of sheaves on $Z$, where $E$ and $F$ are vector bundles.  If $\phi^* E \to \phi^* F$ is generically injective, then the  sequence 
\[ 0 \to \phi^*E \to \phi^* F \to \phi^* G \to 0 \]
on $Z_0$ is exact.
\end{lemma}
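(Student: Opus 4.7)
The plan is to combine the fact that the pullback functor $\phi^*$ is always right exact with the observation that pulling back a vector bundle yields a torsion-free sheaf, which together reduce the lemma to showing injectivity on the left.

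First, I would apply the right-exact functor $\phi^*$ to the sequence \eqref{seq-lemstaysexact}, obtaining
\[ \phi^* E \xrightarrow{\alpha} \phi^* F \to \phi^* G \to 0, \]
which is automatically exact at the last two terms. It therefore suffices to prove that $K := \ker(\alpha)$ is the zero sheaf. Next, I would invoke the hypothesis that $E$ is a vector bundle: this ensures $\phi^* E$ is locally free on $Z_0$, and in particular torsion-free. By the generic injectivity hypothesis, there is a dense open subset $V \subset Z_0$ on which $\alpha$ is injective, so $K|_V = 0$, meaning $K$ is supported on the proper closed subset $Z_0 \setminus V$. Since a subsheaf of a torsion-free sheaf on a reduced variety that is supported on a proper closed subset must vanish, we conclude $K = 0$.

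There is no substantive obstacle to anticipate here: the proof rests only on right-exactness of pullback and torsion-freeness of locally free sheaves on reduced projective varieties, both of which are entirely standard. The hypothesis that $E$ is a vector bundle enters precisely at the step where torsion-freeness of $\phi^* E$ is invoked, which is the only nontrivial input. Should one wish to be scrupulous about reducible $Z_0$, the same argument applies componentwise, since generic injectivity on a dense open subset restricts to generic injectivity on a dense open subset of each irreducible component.
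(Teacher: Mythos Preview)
Your proof is correct and follows essentially the same approach as the paper's: both argue that the kernel of $\phi^*E \to \phi^*F$ is torsion (by generic injectivity) and hence zero as a subsheaf of the torsion-free sheaf $\phi^*E$. The only cosmetic difference is that the paper phrases this via the derived pullback, identifying the kernel as $L^1\phi^*G$ using $L^1\phi^*F=0$, whereas you argue directly without naming the kernel; the substance is identical.
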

\begin{proof}
Apply the derived pullback functor to the sequence (\ref{seq-lemstaysexact}) to obtain the sequence
\[ 0 \to L^1 \phi^* G \to \phi^*E \to \phi^* F \to \phi^* G \to 0 , \] where $L^1 \phi^* F =0$ since $F$ is a vector bundle.  
Since $\phi^* E \to \phi^* F$ is generically injective, $L^1 \phi^* G$ must be torsion. Since $E$ is a vector bundle, $\phi^*E$ is torsion-free and we conclude that $L^1 \phi^* G = 0$. This proves the lemma.
\end{proof}

We first relate the normal bundles $N_{h_t/X_t}$ and $N_{h/\xX}$.

\begin{lemma}
\label{lem-firstDiagram}
We have  $$N_{h_t/X_t} \cong j_t^* N_{h/\xX}.$$
\end{lemma}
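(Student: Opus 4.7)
The plan is to pull back the defining sequence of $N_{h/\xX}$ along $j_t$, use the compatibility $i_t \circ h_t = h \circ j_t$ to rewrite the cokernel, and then compare it with the defining sequence of $N_{h_t/X_t}$ via a snake lemma applied to the normal bundle sequences of the fibers $Y_t \subset \yY$ and $X_t \subset \xX$.

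Since $j_t^* h^* = (h \circ j_t)^* = (i_t \circ h_t)^* = h_t^* i_t^*$, applying the right-exact functor $j_t^*$ to
$$0 \to T_{\yY} \to h^* T_{\xX} \to N_{h/\xX} \to 0$$
yields the identification
$$j_t^* N_{h/\xX} \;\cong\; \operatorname{coker}\bigl(j_t^* T_{\yY} \to h_t^* i_t^* T_{\xX}\bigr).$$

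Next I would form the commutative diagram whose rows are the normal bundle sequences of the fibers, pulled back by $h_t$ in the bottom row. Since $\yY \to U$ and $\xX \to U$ are smooth and $Y_t$, $X_t$ are their fibers over $t \in U$, the normal bundles $N_{Y_t/\yY}$ and $N_{X_t/\xX}$ are locally free and canonically isomorphic to $T_{U,t} \otimes \OO_{Y_t}$ and $T_{U,t} \otimes \OO_{X_t}$ respectively. Because these quotients are locally free, both normal bundle sequences are locally split, so they remain exact after any pullback (making Lemma \ref{lem-staysExact} unnecessary here). Since $h$ is a morphism over $U$, the induced arrow on the fiber normal bundles is the identity on $T_{U,t}$, hence an isomorphism. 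We thus obtain
\[
\begin{CD}
0 @>>> T_{Y_t} @>>> j_t^* T_{\yY} @>>> T_{U,t} \otimes \OO_{Y_t} @>>> 0 \\
@. @VVV @VVV @V\cong VV \\
0 @>>> h_t^* T_{X_t} @>>> h_t^* i_t^* T_{\xX} @>>> T_{U,t} \otimes \OO_{Y_t} @>>> 0
\end{CD}
\]
with exact rows and the rightmost vertical arrow an isomorphism.

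The left vertical map is injective by the defining sequence of $N_{h_t/X_t}$, so the snake lemma gives an isomorphism of the cokernels of the left and middle columns, yielding
$$N_{h_t/X_t} \;\cong\; \operatorname{coker}\bigl(j_t^* T_{\yY} \to h_t^* i_t^* T_{\xX}\bigr) \;\cong\; j_t^* N_{h/\xX}.$$
The one point requiring care is the identification of the rightmost map as the identity on $T_{U,t}$; I expect this to be the main (though mild) obstacle, and it follows from $\pi_1 \circ h = \pi_1|_{\yY}$ together with the fact that the normal bundle of a fiber of a smooth morphism is the pullback of the tangent space at the base point.
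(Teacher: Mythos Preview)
Your argument is essentially the paper's diagram chase, reorganized as a snake lemma rather than a Nine Lemma application; the rows and columns are the same normal-bundle sequences for the fibers $Y_t\subset\yY$ and $X_t\subset\xX$. One small streamlining worth noting: by observing that the fiber normal-bundle sequences are locally split (the quotients being free) and by using only right-exactness of $j_t^*$ to identify $j_t^*N_{h/\xX}$ as the cokernel, you sidestep both appeals to Lemma~\ref{lem-staysExact} that the paper makes in this proof.
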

\begin{proof}
Consider the following diagram. 
\catcode`\@=10
\newdimen\cdsep
\cdsep=3em

\def\cdstrut{\vrule height .25\cdsep width 0pt depth .12\cdsep}
\def\@cdstrut{{\advance\cdsep by 2em\cdstrut}}

\def\arrow#1#2{
  \ifx d#1
    \llap{$\scriptstyle#2$}\left\downarrow\cdstrut\right.\@cdstrut\fi
  \ifx u#1
    \llap{$\scriptstyle#2$}\left\uparrow\cdstrut\right.\@cdstrut\fi
  \ifx r#1
    \mathop{\hbox to \cdsep{\rightarrowfill}}\limits^{#2}\fi
  \ifx l#1
    \mathop{\hbox to \cdsep{\leftarrowfill}}\limits^{#2}\fi
}
\catcode`\@=10

\cdsep=3em
$$
\begin{matrix}
& & 0 & & 0 \cr
& & \arrow{u}{} & & \arrow{u}{} \cr
& & \OO_{Y_t}^{N} & \arrow{r}{=} & \OO_{Y_t}^N \cr
& & \arrow{u}{} & & \arrow{u}{} \cr
0 & \arrow{r}{} &  j_t^* T_{\yY}  & \arrow{r}{} & h_t^* i_t^* T_{\xX} & \arrow{r}{} &  j_t^* N_{h/\xX}& \arrow{r}{} & 0          \cr
& &  \arrow{u}{} & & \arrow{u}{} & & \arrow{u}{\cong} \cr
0 & \arrow{r}{} & T_{Y_t} & \arrow{r}{} & h_t^{*}T_{X_t} & \arrow{r}{} &  N_{h_t/X_t} & \arrow{r}{} & 0 \cr
& & \arrow{u}{} & & \arrow{u}{} \cr
& & 0 & & 0 \cr
\end{matrix}
$$
Since $Y_t$ in $\yY$ and $X_t$  in $\xX$ are fibers of fibrations, their normal bundles are trivial of rank $N= \dim S_d$. The first column in the diagram is the definition of the normal bundle $N_{Y_t/\yY}$. The second column in the diagram is the pullback  under $h_t$ of the sequence 
$$0 \rightarrow T _{X_t} \rightarrow T_{\xX}|_{X_t} \rightarrow N_{X_t/\xX} \rightarrow 0$$
defining  $N_{X_t/\xX}$. Since $h_t$ is generically injective and $T_{X_t} \to T_{\xX}|_{X_t}$ is everywhere injective, by Lemma  \ref{lem-staysExact} the pullback of the sequence by $h_t$ remains exact. The top row is an isomorphism of sheaves in the natural way. The bottom row is the sequence defining $N_{h_t/X_t}$. The middle row is the pullback of the sequence defining $N_{h/\xX}$ under $j_t$ using the identification $h \circ j_t = i_t \circ h_t$. It is exact by Lemma \ref{lem-staysExact} and the fact that $Y_t$ passes through a general point of $\yY$. The required isomorphism  $N_{h_t/X_t} \cong j_t^* N_{h/\xX}$ follows from the diagram by the Nine Lemma. 
\end{proof}

We can also express the normal bundle $N_{h_t/X_t}$ in terms of vertical tangent bundles. Let $K$ denote the cokernel of the map $T_{\yY}^{\ver} \to  h^* T_{\xX}^{\ver}$.

\begin{lemma}
 We have 
$$N_{h_t/X_t} \cong j_t^* K.$$ 
\end{lemma}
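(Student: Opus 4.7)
The plan is to reduce to Lemma \ref{lem-firstDiagram} by establishing an isomorphism $N_{h/\xX} \cong K$ of sheaves on $\yY$; pulling back by $j_t$ then combines with $N_{h_t/X_t} \cong j_t^* N_{h/\xX}$ to give the stated identification.

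To prove $N_{h/\xX} \cong K$, I would organize the two vertical tangent sequences into a commutative diagram with exact rows. The top row is the defining sequence $0 \to T_{\yY}^{\ver} \to T_{\yY} \to h^* \pi_2^* T_{\PP^n} \to 0$, and the bottom row is the pullback by $h$ of $0 \to T_{\xX}^{\ver} \to T_{\xX} \to \pi_2^* T_{\PP^n} \to 0$. The middle vertical map is the natural map $T_{\yY} \to h^* T_{\xX}$ arising from $h$; the right vertical map is the identity on $h^* \pi_2^* T_{\PP^n}$; and these restrict to a canonical map $T_{\yY}^{\ver} \to h^* T_{\xX}^{\ver}$ on the left by the universal property of the kernel. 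Since the rightmost vertical map is an isomorphism, the snake lemma yields
$$K = \operatorname{coker}(T_{\yY}^{\ver} \to h^* T_{\xX}^{\ver}) \cong \operatorname{coker}(T_{\yY} \to h^* T_{\xX}) = N_{h/\xX},$$
which is what we wanted.

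The one technical point I expect to be the main obstacle is verifying exactness of the bottom row after pullback by $h$, which is where Lemma \ref{lem-staysExact} enters. One has to check that $T_{\xX}^{\ver}$ and $T_{\xX}$ are vector bundles (the former because $\pi_2: \xX \to \PP^n$ is smooth, its fibers being the affine-linear families of hypersurfaces through a fixed point of $\PP^n$) and that the inclusion $T_{\xX}^{\ver} \hookrightarrow T_{\xX}$ remains generically injective after pullback, which is automatic since it is a subbundle inclusion. Once this hypothesis is secured, everything else is formal diagram chasing, so no further substantive difficulty is anticipated.
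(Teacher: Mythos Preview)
Your argument is correct and follows essentially the same route as the paper: both set up the commutative diagram comparing the two vertical tangent sequences, invoke Lemma \ref{lem-staysExact} to guarantee exactness after pullback, and read off the isomorphism of cokernels. The only difference is cosmetic: you prove the global isomorphism $K \cong N_{h/\xX}$ on $\yY$ via the snake lemma and then apply $j_t^*$, whereas the paper first restricts everything to $Y_t$ and then runs the Nine Lemma there; your ordering is marginally cleaner but the content is identical.
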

\begin{proof}
This is another diagram chase. Consider the following diagram.

\catcode`\@=11
\newdimen\cdsep
\cdsep=3em

\def\cdstrut{\vrule height .25\cdsep width 0pt depth .12\cdsep}
\def\@cdstrut{{\advance\cdsep by 2em\cdstrut}}

\def\arrow#1#2{
  \ifx d#1
    \llap{$\scriptstyle#2$}\left\downarrow\cdstrut\right.\@cdstrut\fi
  \ifx u#1
    \llap{$\scriptstyle#2$}\left\uparrow\cdstrut\right.\@cdstrut\fi
  \ifx r#1
    \mathop{\hbox to \cdsep{\rightarrowfill}}\limits^{#2}\fi
  \ifx l#1
    \mathop{\hbox to \cdsep{\leftarrowfill}}\limits^{#2}\fi
}
\catcode`\@=12

\cdsep=3em
$$
\begin{matrix}
& & 0 & & 0 \cr
& & \arrow{u}{} & & \arrow{u}{} \cr
& & j_t^* h^* \pi_2^* T_{\PP^n} & \arrow{r}{=} & h_t^* i_t^* \pi_2^*T_{\PP^n} \cr
& & \arrow{u}{} & & \arrow{u}{} \cr
0 & \arrow{r}{} & j_t^* T_{\yY}  & \arrow{r}{} & h_t^* i_t^*T_{\xX} & \arrow{r}{} & j_t^* N_{h/\xX} & \arrow{r}{} & 0          \cr
& &  \arrow{u}{} & & \arrow{u}{} & & \arrow{u}{\cong} \cr
0 & \arrow{r}{} & j_t^* T_{\yY}^{\ver} & \arrow{r}{} & h_t^{*} i_t^*T_{\xX}^{\ver} & \arrow{r}{} & j_t^* K & \arrow{r}{} & 0 \cr
& & \arrow{u}{} & & \arrow{u}{} \cr
& & 0  & & 0 \cr
\end{matrix}
$$

The first two columns come from the definitions of $T_{\yY}^{\ver}$ and $T_{\xX}^{\ver}$, respectively. The restrictions of each to $Y_t$ remain exact by Lemma \ref{lem-staysExact}. The top isomorphism is also the natural one, since $j_t^* h^* = h_t^* i_t^*$.

The bottom row is exact by Lemma \ref{lem-staysExact}, and the middle row is exact just as in the proof of Lemma \ref{lem-firstDiagram}.

\end{proof}

Thus, in order to study positivity of $N_{h_t/X_t}$, we may study positivity of $K$.

\begin{lemma}[cf \cite{Pacienza2} Section 2]
Let $M_d^{\PP^n}$ be the bundle defined by the sequence 
\[ 0 \to M_d^{\PP^n} \to \OO_{\PP^n} \otimes S_d \to \OO_{\PP^n}(d) \to 0 .\] Then $T_{\xX}^{\ver} \cong \pi_2^*M^{\PP^n}_d$.
\end{lemma}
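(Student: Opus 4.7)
The plan is to realize $\xX$ as a divisor in $\PP^n \times U$ cut out by the universal equation and then read off $T_{\xX}^{\ver}$ directly from the conormal sequence. Concretely, $\xX \subset \PP^n \times U$ is the zero locus of the tautological section $\sigma \in H^0(\PP^n \times U, \pi_2^* \OO_{\PP^n}(d))$ whose value at $(p,f)$ is $f(p)$. Since $U$ is étale over the vector space $S_d$, the tangent bundle of $U$ is canonically trivial with fiber $S_d$, and therefore
\[
T_{\PP^n \times U} \;\cong\; \pi_2^* T_{\PP^n} \;\oplus\; \pi_1^*T_U \;\cong\; \pi_2^* T_{\PP^n} \;\oplus\; (\OO_{\PP^n \times U} \otimes S_d).
\]

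Next, I would apply the normal bundle sequence for $\xX \subset \PP^n \times U$, which after restriction to $\xX$ reads
\[
0 \to T_{\xX} \to \pi_2^* T_{\PP^n}|_{\xX} \oplus (\OO_{\xX} \otimes S_d) \to \pi_2^* \OO_{\PP^n}(d)|_{\xX} \to 0.
\]
The key observation is that the restriction of the right-hand surjection to the second summand $\OO_{\xX} \otimes S_d$ is precisely the pullback under $\pi_2$ of the evaluation map
\[
\operatorname{ev}: \OO_{\PP^n} \otimes S_d \to \OO_{\PP^n}(d),
\]
whose kernel is by definition $M_d^{\PP^n}$.

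Finally, I would compare this with the defining sequence
\[
0 \to T_{\xX}^{\ver} \to T_{\xX} \to \pi_2^* T_{\PP^n} \to 0.
\]
Under the inclusion $T_{\xX} \hookrightarrow \pi_2^* T_{\PP^n}|_{\xX} \oplus (\OO_{\xX} \otimes S_d)$, the subsheaf $T_{\xX}^{\ver}$ corresponds exactly to the intersection of $T_{\xX}$ with the second summand, i.e.\ to the kernel of $\pi_2^*\operatorname{ev}$. A short diagram chase (or an application of the snake lemma) then identifies this kernel with $\pi_2^* M_d^{\PP^n}$, yielding the desired isomorphism $T_{\xX}^{\ver} \cong \pi_2^* M_d^{\PP^n}$.

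The argument is mostly formal; the only point that deserves care is the identification of $\pi_1^* T_U$ with the trivial bundle of fiber $S_d$ and the verification that, under this identification, the differential of the defining equation of $\xX$ in the $U$-direction is literally $\operatorname{ev}$ pulled back to $\xX$. Once that is checked, the decomposition of $T_{\PP^n \times U}$ does all the work and no further positivity or deformation input is needed.
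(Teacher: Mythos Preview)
Your proposal is correct and follows essentially the same approach as the paper: both use the normal bundle sequence for $\xX \subset \PP^n \times U$, the splitting $T_{\PP^n \times U}|_{\xX} \cong \pi_2^* T_{\PP^n} \oplus (\OO_{\xX} \otimes S_d)$, and the identification of the map to $\pi_2^*\OO(d)$ on the $S_d$ summand with the pulled-back evaluation map. The paper packages this as a $3\times 3$ diagram chase, while you phrase it as a snake-lemma comparison, but the content is identical.
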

\begin{proof}
This is yet another diagram chase. Consider the following diagram.

\catcode`\@=11
\newdimen\cdsep
\cdsep=3em

\def\cdstrut{\vrule height .25\cdsep width 0pt depth .12\cdsep}
\def\@cdstrut{{\advance\cdsep by 2em\cdstrut}}

\def\arrow#1#2{
  \ifx d#1
    \llap{$\scriptstyle#2$}\left\downarrow\cdstrut\right.\@cdstrut\fi
  \ifx u#1
    \llap{$\scriptstyle#2$}\left\uparrow\cdstrut\right.\@cdstrut\fi
  \ifx r#1
    \mathop{\hbox to \cdsep{\rightarrowfill}}\limits^{#2}\fi
  \ifx l#1
    \mathop{\hbox to \cdsep{\leftarrowfill}}\limits^{#2}\fi
}
\catcode`\@=12

\cdsep=3em
$$
\begin{matrix}
& & 0 & & 0 \cr
& & \arrow{u}{} & & \arrow{u}{} \cr
& & \pi_2^* \OO(d) & \arrow{r}{=} & \pi_2^* \OO(d) \cr
& & \arrow{u}{} & & \arrow{u}{} \cr
0 & \arrow{r}{} & \OO \otimes S_d  & \arrow{r}{} & \pi_2^*T_{\PP^n} \oplus \OO \otimes S_d & \arrow{r}{} & \pi_2^*T_{\PP^n} & \arrow{r}{} & 0          \cr
& &  \arrow{u}{} & & \arrow{u}{} & & \arrow{u}{\cong} \cr
0 & \arrow{r}{} & T_{\xX}^{\ver} & \arrow{r}{} & T_{\xX} & \arrow{r}{} & \pi_2^*T_{\PP^n} & \arrow{r}{} & 0 \cr
& & \arrow{u}{} & & \arrow{u}{} \cr
& & 0  & & 0 \cr
\end{matrix}
$$

The bottom row is the defining sequence for $T_{\xX}^{\ver}$. The middle row comes from the natural splitting of $\pi_2^*T_{\PP^n} \oplus \OO \otimes S_d$. 

The second column is the normal bundle sequence for $\xX \subset \PP^n \times U$. The maps in the first column are naturally induced by the maps in the second and prove the desired result.

\end{proof}

\begin{lemma}[cf \cite{ClemensRan} Section 3]
\label{lem-sCalc}
There exists an integer $s$ so that there is a surjective map
\[ \bigoplus_{i=1}^s h^* \pi_2^* M_1^{\PP^n} \to K. \]
\end{lemma}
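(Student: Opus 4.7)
The plan is to construct the surjection by first producing a sheaf surjection on $\PP^n$ itself from a trivial direct sum of copies of $M_1^{\PP^n}$ onto $M_d^{\PP^n}$, and then pull this back to $\yY$ and compose with the defining quotient $h^* T_{\xX}^{\ver} \twoheadrightarrow K$. The previous lemma identifies $h^* T_{\xX}^{\ver}$ with $h^*\pi_2^* M_d^{\PP^n}$, so it suffices to produce a surjection $\bigoplus_{i=1}^s M_1^{\PP^n} \twoheadrightarrow M_d^{\PP^n}$ on $\PP^n$.

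The candidate map comes from polynomial multiplication. The multiplication $S_{d-1} \otimes S_1 \to S_d$ defines a morphism of sheaves $\OO_{\PP^n} \otimes S_{d-1} \otimes (\OO_{\PP^n} \otimes S_1) \to \OO_{\PP^n} \otimes S_d$; I would first check that restricting the second factor to $M_1^{\PP^n} \subset \OO_{\PP^n} \otimes S_1$ yields a map that factors through the subsheaf $M_d^{\PP^n} \subset \OO_{\PP^n} \otimes S_d$. This is clear fiberwise: at a point $x \in \PP^n$, the fiber $M_1^{\PP^n}|_x$ consists of linear forms vanishing at $x$, and multiplying any such $L$ by any $G \in S_{d-1}$ produces a degree-$d$ polynomial vanishing at $x$, i.e.\ an element of $M_d^{\PP^n}|_x$. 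Taking $s = \dim S_{d-1}$ and trivializing $\OO_{\PP^n} \otimes S_{d-1}$, this gives the desired map $\bigoplus_{i=1}^s M_1^{\PP^n} \to M_d^{\PP^n}$.

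The only substantive step is checking surjectivity. Since source and target are vector bundles on $\PP^n$, it suffices to verify surjectivity on fibers. At $x \in \PP^n$ this amounts to the identity $\II_x \cap S_d = S_{d-1} \cdot (\II_x \cap S_1)$, i.e.\ that the ideal of the point $x$ is generated in degree $1$. This is standard: $x$ is the zero locus of $n$ linear forms, so any homogeneous polynomial of degree $d$ vanishing at $x$ lies in the ideal generated by those linear forms (one can see this directly by choosing coordinates so that $x = [1 : 0 : \cdots : 0]$, in which case the linear forms are simply $x_1, \ldots, x_n$, and every monomial of degree $d$ other than $x_0^d$ is divisible by one of them).

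Once this surjection on $\PP^n$ is in hand, pulling back via $\pi_2$ to $\xX$ and then via $h$ to $\yY$ preserves surjectivity (pullback is right-exact), giving $\bigoplus_{i=1}^s h^*\pi_2^* M_1^{\PP^n} \twoheadrightarrow h^*\pi_2^* M_d^{\PP^n} = h^* T_{\xX}^{\ver}$. Composing with the surjection $h^* T_{\xX}^{\ver} \twoheadrightarrow K$ from the definition of $K$ completes the argument. No step appears to present a serious obstacle; the proof is essentially bookkeeping around the fact that $M_d^{\PP^n}$ is generated by $M_1^{\PP^n}$ after tensoring with the trivial bundle $\OO_{\PP^n} \otimes S_{d-1}$.
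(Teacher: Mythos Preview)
Your proof is correct and follows essentially the same approach as the paper: both construct the surjection via multiplication maps $M_1^{\PP^n} \to M_d^{\PP^n}$ by degree $d-1$ polynomials, check surjectivity on fibers using the fact that the ideal of a point is generated by linear forms, and then pull back and compose with the quotient to $K$. Your version is slightly more explicit (you take $s = \dim S_{d-1}$ at once), whereas the paper phrases it as an inductive process of adding copies of $M_1^{\PP^n}$ until the cokernel vanishes, but the underlying idea is identical.
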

\begin{proof}
Given a degree $d-1$ polynomial $P$, we get a natural map $M_1^{\PP^n} \to M_d^{\PP^n}$, given by multiplication by $P$. Since polynomials of the form $x_i P$ generate the fiber of $M_d^{\PP^n}$ at a point, we see that we can inductively keep adding copies of $M_1^{\PP^n}$ to decrease the cokernel of the combined map.
\end{proof}

In Voisin, Pacienza, and Clemens-Ran \cite{Voisincorrection, Pacienza, Pacienza2, ClemensRan}, the strategy is to show that if $Y_t$ is not contained in the locus on $X_t$ swept out by lines, then $s$ in Lemma \ref{lem-sCalc} is not too large relative to $n$. Since the first Chern class of $M_1^{\PP^n}$ is negative the hyperplane class on $\PP^n$, this bounds the negativity of the normal bundle of any subvariety $Y_t \subset X_t$ that is not contained in the locus swept out by lines.

If $n=3$, we get a map with torsion cokernel if we merely let $s=1$, so bounding $s$ is not useful in studying curves on surfaces in $\PP^3$. Instead, we combine the fact that $M_1^{\PP^n} \cong \Omega_{\PP^n}(1)$ with our understanding of $\Omega_{\PP^n}(1)$ pulled back to $Y_t$ to get better bounds on the degree of $N_{h_t/X_t}$. The statement we use for our purposes is the following.

\begin{corollary}
\label{cor-mapToNormalBundle}
If $n=3$ and $\yY$ is a family of curves, there is a map $h_t^* i_t^* \pi_2^* \Omega_{\PP^n}(1) \to N_{h_t/X_t}$ with torsion cokernel.
\end{corollary}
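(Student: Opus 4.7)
The plan is to chain together the three preceding lemmas and then exploit the fact that on a curve a rank-one sheaf can be hit by any one of finitely many generically surjective maps.

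First I would apply Lemma \ref{lem-sCalc} (combined with the identification $T_{\xX}^{\ver} \cong \pi_2^* M_d^{\PP^n}$) to obtain a surjection
\[ \bigoplus_{i=1}^s h^* \pi_2^* M_1^{\PP^n} \twoheadrightarrow K. \]
Pulling back along $j_t$ is right exact, so I get a surjection
\[ \bigoplus_{i=1}^s j_t^* h^* \pi_2^* M_1^{\PP^n} \twoheadrightarrow j_t^* K. \]
Using the identification $j_t^* h^* = h_t^* i_t^*$, Lemma \ref{lem-firstDiagram} together with the second lemma identifying $N_{h_t/X_t}\cong j_t^*K$, and the standard identification $M_1^{\PP^n}\cong \Omega_{\PP^n}(1)$ coming from the Euler sequence, this becomes a surjection
\[ \bigoplus_{i=1}^s h_t^* i_t^* \pi_2^* \Omega_{\PP^n}(1) \twoheadrightarrow N_{h_t/X_t}. \]

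Now I would use the hypotheses that $n=3$ and that $\yY$ is a family of curves. Since $Y_t$ is a curve and $X_t$ is a surface (because $n=3$), the sheaf $N_{h_t/X_t}$ has generic rank $1$ on $Y_t$. Write $\alpha_1,\dots,\alpha_s$ for the components of the surjection above, each a map $\alpha_i:h_t^* i_t^* \pi_2^* \Omega_{\PP^3}(1)\to N_{h_t/X_t}$. At the generic point $\eta$ of $Y_t$, the stalk $(N_{h_t/X_t})_\eta$ is a one-dimensional vector space over the function field, and the combined map $\bigoplus_i(\alpha_i)_\eta$ is surjective. Therefore at least one $(\alpha_i)_\eta$ is nonzero, hence surjective, at $\eta$.

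That single $\alpha_i$ is then generically surjective on $Y_t$, which on a curve is the same as having torsion cokernel. This gives the desired map $h_t^* i_t^* \pi_2^* \Omega_{\PP^n}(1) \to N_{h_t/X_t}$. No step is a real obstacle: the only thing one has to be careful about is that pulling back the surjection from Lemma \ref{lem-sCalc} under $j_t$ really yields a surjection (which follows from right exactness of pullback, so Lemma \ref{lem-staysExact} is not needed here) and that one uses both $n=3$ and $\dim Y_t=1$ to conclude $N_{h_t/X_t}$ has generic rank one, which is what lets a single summand suffice.
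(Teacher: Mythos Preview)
Your proposal is correct and follows essentially the same route as the paper: pull back the surjection of Lemma~\ref{lem-sCalc} along $j_t$, identify $j_t^*K$ with $N_{h_t/X_t}$ and $M_1^{\PP^n}$ with $\Omega_{\PP^n}(1)$, and then use that the target has rank one to conclude that a single summand already has torsion cokernel. The paper's proof is terser (it phrases the rank-one observation on $K$ rather than on $N_{h_t/X_t}$), but your more explicit argument at the generic point is just a careful unpacking of the same step.
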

\begin{proof}
Pull back the surjective map $\bigoplus_{i=1}^s h^* \pi_2^* M_1^{\PP^n} \to K$ from Lemma \ref{lem-sCalc} under $j_t$. Since $K$ has rank $1$ and $N_{h_t/X_t} \cong j_t^* K$, the result follows.
\end{proof}

\section{Scrolls and degree considerations}\label{sec-Proof}
In this section, we prove Theorem \ref{thm-algHyp}. We specialize the discussion in the previous section to the case of curves.

\begin{proposition}
\label{prop-Scrolls}
Let $h: C \to \PP^n$ be a generically injective map of degree $e$ from a smooth curve $C$ to $\PP^n$. Then line bundle quotients of $h^* \Omega_{\PP^n}(1)$ of degree $-m$ give rise to surface scrolls in $\PP^n$ of degree at most $e-m$ that contain $h(C)$.
\end{proposition}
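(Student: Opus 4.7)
The plan is to use the Euler sequence to convert a line bundle quotient of $h^*\Omega_{\PP^n}(1)$ into a family of lines passing through $h(C)$ in $\PP^n$, and then to bound the degree of the resulting scroll by a short Chern class calculation. First, the Euler sequence $0 \to \Omega_{\PP^n}(1) \to V^* \otimes \OO_{\PP^n} \to \OO_{\PP^n}(1) \to 0$, with $V = \CC^{n+1}$, identifies the fiber of $\Omega_{\PP^n}(1)$ at $p$ with the space of linear forms vanishing at $p$, so that $h^*\Omega_{\PP^n}(1)$ becomes a rank $n$ subbundle of $V^* \otimes \OO_C$. Given a quotient $h^*\Omega_{\PP^n}(1) \to L$ of degree $-m$, I would take the (saturated) kernel $E$, a rank $n-1$ subbundle of $V^* \otimes \OO_C$, and then form the fiberwise annihilator $F \subset V \otimes \OO_C$ with $F_p = \{v \in V : f(v) = 0 \text{ for all } f \in E_p\}$. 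A dimension count gives $\operatorname{rank} F = 2$, and the canonical pairing identifies $(V \otimes \OO_C)/F \cong E^*$.

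Next, I would interpret $F$ geometrically. Each fiber $F_p$ is a two-dimensional subspace of $V$, hence defines a line $\ell_p = \PP(F_p) \subset \PP^n$; as $p$ varies these lines sweep out a surface scroll $R$, realized concretely as the image of the morphism $\PP(F) \to \PP^n$ induced by the inclusion $F \hookrightarrow V \otimes \OO_C$. To see that $R$ contains $h(C)$, write $h(p) = [v_p]$ for $v_p \in V$; then every $f \in E_p \subset h^*\Omega_{\PP^n}(1)_p$ vanishes on $v_p$ by the very definition of $\Omega_{\PP^n}(1)$, so $v_p \in F_p$ and therefore $h(p) \in \ell_p \subset R$.

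To bound the degree of $R$, I would use that the Euler sequence forces $\det \Omega_{\PP^n}(1) \cong \OO_{\PP^n}(-1)$, so $\deg h^*\Omega_{\PP^n}(1) = -e$; the two short exact sequences then give $\deg E = -e - (-m) = m - e$ and $\deg F = -\deg E^* = m - e$. Writing $\xi = c_1(\OO_{\PP(F)}(1))$ and $\pi \colon \PP(F) \to C$, the standard Chern class relation $\xi^2 = -\pi^* c_1(F) \cdot \xi$ (the $c_2$ term vanishes on a curve) integrates to $\xi^2 = -\deg F = e - m$. When $\PP(F) \to R$ is birational this is exactly $\deg R$; otherwise the image degree divides $e - m$. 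Either way, $\deg R \le e - m$, which is what is claimed.

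The main conceptual hurdle is keeping the duality straight: the quotient $L$ naturally lives on the $V^* \otimes \OO_C$ side, whereas lines in $\PP^n$ naturally correspond to rank $2$ subbundles of $V \otimes \OO_C$, and the annihilator construction is the bridge that both produces the ruling of $R$ and lets one verify that $h(p) \in \ell_p$. Once that is in place, the degree bound is a one-line Chern class computation on a $\PP^1$-bundle over $C$.
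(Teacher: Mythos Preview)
Your argument is correct and follows the same route as the paper's proof: pull back the Euler sequence, take the kernel $E$ (the paper's $S$) of the quotient $h^*\Omega_{\PP^n}(1)\to L$, pass to a rank~$2$ bundle, and projectivize to obtain a scroll through $h(C)$ of degree $e-m$ (or a divisor thereof). The only cosmetic difference is that you work with the annihilator subbundle $F\subset V\otimes\OO_C$ while the paper works with the dual rank~$2$ quotient $Q'=(V^*\otimes\OO_C)/S\cong F^*$; your explicit verification that $h(p)\in\ell_p$ and your $\xi^2=-\deg F$ computation simply spell out what the paper leaves to the universal property.
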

\begin{proof}
The pullback of the Euler sequence on $\PP^n$ to $C$ gives rise to the sequence
\[ 0 \to h^*\Omega_{\PP^n}(1) \to \OO_{C}^{n+1} \to \OO_C(1) \to 0 . \]
Suppose that $Q$ is a degree $-m$ line bundle quotient of $h^*\Omega_{\PP^n}(1)$ $$0 \rightarrow S \rightarrow h^*\Omega_{\PP^n}(1) \rightarrow Q \rightarrow 0$$ with kernel $S$. Then $S$ is a vector sub-bundle of $h^*\Omega_{\PP^n}(1)$ of degree $m-e$. Composing $S$ with the inclusion   $h^*\Omega_{\PP^n}(1) \rightarrow \OO_C^{n+1}$ realizes $S$ as a vector sub-bundle of $\OO_C^{n+1}$, with quotient $Q'$ of rank $2$ and degree $e-m$. By the universal property of projective space, this gives a map from $\psi: \PP(Q') \to \PP^n$ whose image contains $h(C)$. If the map $\psi$ is generically injective, the scroll has degree $e-m$. If $\psi$ is generically $d_0$ to one, then the scroll has degree $\frac{e-m}{d_0}$.
\end{proof}

\begin{corollary}
\label{cor-degreeOfrk1Quot}
Let $h: C \to \PP^n$ be a generically injective map of degree $e$ from a smooth curve $C$ to $\PP^n$. Assume that $h(C)$ does not lie on a surface scroll of degree less than $k$.  Then any rank $1$ quotient of $h^* \Omega_{\PP^n}(1)$ (not necessarily a line bundle) has degree at least $k-e$.
\end{corollary}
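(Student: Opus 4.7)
The plan is to reduce to the line bundle case already handled by Proposition \ref{prop-Scrolls}. Given an arbitrary rank $1$ quotient $h^*\Omega_{\PP^n}(1) \twoheadrightarrow Q$, the sheaf $Q$ may fail to be locally free because of a torsion subsheaf $T \subset Q$. On a smooth curve, however, $Q/T$ is torsion-free of rank $1$, hence a line bundle $L$, and the composition $h^*\Omega_{\PP^n}(1) \twoheadrightarrow Q \twoheadrightarrow L$ is a surjective line bundle quotient.

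I would then compute degrees. Writing $\deg(L) = -m$, the proposition produces a surface scroll in $\PP^n$ of degree at most $e-m = e+\deg(L)$ whose image contains $h(C)$. By hypothesis, any surface scroll containing $h(C)$ has degree at least $k$, so
\[
e + \deg(L) \;\geq\; k, \qquad\text{i.e.}\qquad \deg(L) \;\geq\; k - e.
\]
Since $T$ is a torsion sheaf on a smooth curve, $\deg(T) = \mathrm{length}(T) \geq 0$, and therefore
\[
\deg(Q) \;=\; \deg(L) + \deg(T) \;\geq\; \deg(L) \;\geq\; k - e,
\]
which is the desired inequality.

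The only subtlety I anticipate is making sure the degree bookkeeping in Proposition \ref{prop-Scrolls} is used correctly (a line bundle quotient of degree $-m$ yields a scroll of degree $\leq e-m$, possibly after dividing by the generic fiber size $d_0$ of the scroll map); but dividing by $d_0 \geq 1$ only decreases the degree of the scroll, so the inequality scroll degree $\leq e+\deg(L)$ still holds and the argument is unchanged. Beyond that, this is a direct quotient-and-torsion argument, with no real obstacle.
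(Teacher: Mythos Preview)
Your proposal is correct and is essentially the same argument as the paper's: take $L = Q/\text{torsion}$, apply Proposition~\ref{prop-Scrolls} to the line bundle quotient $L$ to get $\deg L \geq k-e$, and then use $\deg Q \geq \deg L$. Your write-up is simply more explicit about the torsion bookkeeping and the $d_0$ subtlety than the paper's one-line proof.
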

\begin{proof}
Let $Q$ be a rank $1$ quotient, and let $L$ be the line bundle given by $Q$ mod torsion. Then $\deg Q \geq \deg L \geq e-k$ by Proposition \ref{prop-Scrolls}.
\end{proof}

\begin{proof}[Proof of Theorem \ref{thm-algHyp}]
Let $X \subset \PP^3$ be a very general degree $d$ surface.  Let $h: C \to X$ be the normalization of a complete intersection curve of type $(d, k)$ in $X$. Assume that $C$ has genus $g$. Then by Corollary \ref{cor-mapToNormalBundle}, there is a map $\alpha: h^* \Omega_{\PP^n}(1) \to N_{h/X}$ with torsion cokernel. Let $Q$ be the image of $\alpha$. Then since $Q$ injects into $N_{h/X}$, $Q$ is a rank 1 quotient of $h^* \Omega_{\PP^n}(1)$ with $\deg Q \leq \deg N_{h/X}$. In order to use Corollary \ref{cor-degreeOfrk1Quot}, we need to understand the smallest possible degree of a scroll containing $h(C)$.

If $k \leq d$, then the degree $k$ hypersurface containing $h(C)$ is the smallest degree surface containing $h(C)$, so any scroll containing $h(C)$ will have degree at least $k$. If $d > k$, then the only irreducible surface of degree less than $k$ containing $h(C)$ is $X$. Since $d \geq 5$ and $X$ is very general, $X$ is irreducible and of general type. In particular, $X$ is not a scroll. Thus, the complete intersection curve $h(C)$ lies on no surface scrolls of degree less than $k$. Hence, by Corollary \ref{cor-degreeOfrk1Quot}, $$\deg N_{h/X} \geq \deg Q \geq k-dk.$$ 

On the other hand, we know that $$\deg N_{h/X} = dk(4-d)+2g-2.$$ Rearranging the inequality
\[ dk(4-d)+2g-2 \geq k-dk ,\]
we obtain
\[ 2g-2 \geq dk(d-5)+k \]
or 
\[ g \geq \frac{dk(d-5)+k}{2}+1 \]
as desired.
\end{proof}

\begin{remark}
If one had a better bound for the minimal degree of the universal line of a scroll containing $h(C)$, one could get better genus bounds. In particular, if the degree were at least $k+s$, then we would get a genus bound of
\[ g \geq \frac{dk(d-5)+k+s}{2} + 1 .\]
One can ask if the cone over $h(C)$ with vertex at the most singular point of $h(C)$ is the scroll containing $h(C)$ with minimal degree of the universal line. If this were true, then the corresponding genus bound would be
\[ g \geq \frac{dk(d-4)-\lfloor \sqrt{dk^2+1} \rfloor +1}{2} .\]
\end{remark}

\bibliographystyle{plain}

\end{document}